%% file: main.tex
\documentclass[11pt]{article}

\input{preamble}

\title{A Counterexample to the Exact Eight-Point Angular-Energy Conjecture}
\author{Jinze Zhao\\University of California, San Diego\\\texttt{jiz419@ucsd.edu}}
\date{}

\begin{document}
\maketitle

\begin{abstract}
\input{sections/abstract}
\end{abstract}


\input{sections/introduction}
\input{sections/main}
\input{sections/conclusion}
\input{sections/disclosure}
\bibliographystyle{plainnat}
\bibliography{references}

\appendix
\input{sections/appendix}
\end{document}

%% file: preamble.tex
\usepackage[T1]{fontenc}
\usepackage[utf8]{inputenc}
\usepackage{lmodern}
\usepackage[margin=1in]{geometry}
\usepackage{microtype}
\usepackage{amsmath,amssymb,amsthm,mathtools}
\usepackage{bm}
\usepackage{graphicx}
\graphicspath{{figures/}}
\usepackage{booktabs}
\usepackage{array}
\usepackage[shortlabels]{enumitem}
\usepackage[numbers,sort&compress]{natbib}
\usepackage{xcolor}
\usepackage[colorlinks=true,linkcolor=blue,citecolor=blue,urlcolor=blue]{hyperref}
\usepackage[capitalise,noabbrev]{cleveref}

\theoremstyle{plain}
\newtheorem{theorem}{Theorem}[section]
\newtheorem{lemma}[theorem]{Lemma}
\newtheorem{proposition}[theorem]{Proposition}

\theoremstyle{definition}

\newtheorem{conjecture}[theorem]{Conjecture}
\theoremstyle{remark}

\newcommand{\R}{\mathbb{R}}

\newcommand{\norm}[1]{\left\lVert #1\right\rVert}
\newcommand{\ip}[2]{\left\langle #1,#2\right\rangle}

\newcommand{\hzero}{h_0}
\newcommand{\hstar}{h_*}
\newcommand{\Q}{\mathcal{Q}}
\newcommand{\A}{\mathcal{A}}
\newcommand{\F}{\mathcal{F}}
\hypersetup{pdftitle={A Counterexample to the Exact Eight-Point Angular-Energy Conjecture}}

%% file: sections/abstract.tex
We give an exact counterexample to the eight-point angular-energy conjecture stated by Mouchet in 2005 and published in 2006. The conjecture specifies both a radical value for the global maximum and a square antiprism with unit-side horizontal squares separated by $\hzero=\sqrt{1+2\sqrt{2}}/2$. We prove that the same family at the rational separation $489/500$ has a strictly larger angular sum. The proof reduces the sum of all $168$ angle cosines to a one-variable formula, factors its derivative, and certifies its sign using rational inequalities. We also characterize the unique maximizing separation within this family. All comparisons needed for the counterexample are exact. The result disproves the exact stated conjecture; it does not determine the unrestricted eight-point maximum. A targeted literature search located no previous correction, but priority has not been established exhaustively.

%% file: sections/introduction.tex
\section{Introduction and precise scope}
\label{sec:introduction}

Consider distinct points $x_1,\ldots,x_N$ in $\R^3$. For each triangle formed by three of the points, sum the cosines of its three interior angles, and then sum over all triangles. Maximizing this quantity is a finite-dimensional, nonconvex geometric optimization problem. Its formulation in terms of actual point coordinates enforces geometric compatibility among the angles.

Mouchet introduced explicit conjectures for this objective in connection with bounds for attractive Coulombian systems \citep{Mouchet2006JMP}. The eight-point claim appears as Conjecture~3, labelled $(C_8)$, in the May~25, 2005 preprint and the 2006 journal article. A companion paper discusses the underlying optimization \citep{Mouchet2006NPA}. Hiriart-Urruty subsequently included the eight-point question, with approximate numerical parameters, in Problem~4 of a collection of optimization and matrix-analysis problems \citep{HiriartUrruty2009}.

The target here is the \emph{exact} eight-point claim: a specified separation of two rotated squares and a specified radical value for the maximum. A small improvement that is invisible at the reported three-decimal precision suffices to refute that claim. We exhibit such an improvement and prove it without floating-point assumptions.

\subsection{The configuration and the conjectured value}
\label{subsec:statement}

Let $\Q_N$ denote the set of ordered $N$-tuples of pairwise distinct points of $\R^3$. Define
\begin{equation}
 F_N(x_1,\ldots,x_N)
 =\sum_{i=1}^N\ \sum_{\substack{j<k\\j,k\ne i}}
 \frac{\ip{x_j-x_i}{x_k-x_i}}
 {\norm{x_j-x_i}\,\norm{x_k-x_i}}.
 \label{eq:definition}
\end{equation}
Every unordered triple contributes exactly three terms, one at each vertex. In particular, $F_8$ contains $3\binom{8}{3}=168$ terms. The formula is also well defined for collinear triples: the participating vectors remain nonzero. Our counterexample has no coincident points and requires no limiting configuration.

Write $s=\sqrt{2}$ and $R=1/\sqrt{2}$. For $h\geq 0$, define eight points by
\begin{align}
 p_j(h)&=\left(R\cos\frac{j\pi}{2},\ R\sin\frac{j\pi}{2},\ -\frac{h}{2}\right),
 \label{eq:lower-square}\\
 q_j(h)&=\left(R\cos\left(\frac{\pi}{4}+\frac{j\pi}{2}\right),\
 R\sin\left(\frac{\pi}{4}+\frac{j\pi}{2}\right),\ \frac{h}{2}\right),
 \qquad j=0,1,2,3.
 \label{eq:upper-square}
\end{align}
Each square has side length $R\sqrt{2}=1$. The planes are separated by $h$, and the squares are rotated by $\pi/4$. Denote this ordered configuration by $\A(h)$, and put
\begin{equation}
 \F(h)=F_8(\A(h)),\qquad
 \hzero=\frac{\sqrt{1+2\sqrt{2}}}{2},\qquad h_1=\frac{489}{500}.
 \label{eq:heights}
\end{equation}
The points remain distinct at $h=0$, where their projections form a regular octagon.

\begin{conjecture}[Exact eight-point claim, Mouchet $(C_8)$]
\label{conj:c8}
The supremum of $F_8$ over $\Q_8$ equals
\begin{equation}
\begin{split}
 C_0=16\left[
 \frac45+\frac1{\sqrt2}+\frac1{\sqrt5}
 +\frac{4(1+\sqrt2)}{\sqrt5\sqrt{5+4\sqrt2}}
 \right.\\[-2pt]
 \left.
 {}+\frac{3+2\sqrt2}{\sqrt{5+4\sqrt2}}
 -\frac1{5+4\sqrt2}
 \right],
\end{split}
\label{eq:conjectured-value}
\end{equation}
and it is attained at $\A(\hzero)$.
\end{conjecture}

The mathematical content of this statement is reproduced from \citet[Conjecture~3]{Mouchet2006JMP}; the notation above is ours. We will verify directly that $\F(\hzero)=C_0$ so that the comparison addresses both parts of the claim.

\begin{theorem}[Explicit counterexample]
\label{thm:counterexample}
The configurations in \cref{eq:lower-square,eq:upper-square} satisfy
\begin{equation}
 F_8\!\left(\A\!\left(\frac{489}{500}\right)\right)
 > F_8(\A(\hzero))=C_0.
 \label{eq:counterexample}
\end{equation}
Moreover, $\F'(\hzero)<0$. Thus $\A(\hzero)$ is not a local maximizer, even within the one-parameter family $\A(h)$, and Conjecture 1.1 is false.
\end{theorem}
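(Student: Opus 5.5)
The plan is to reduce $\F(h)$ to an explicit one-variable function by grouping the $168$ cosine terms according to the symmetry of $\A(h)$. The symmetry group of the square antiprism (order $16$, dihedral type $D_{4d}$) acts transitively on the eight points. So $\F(h)=8\,G(h)$, where $G(h)$ is the sum of the $21$ vertex cosines at the single vertex $p_0(h)$. At $p_0$ the other seven points split into the two adjacent points of its own square (distance $1$), the opposite point of its own square (distance $\sqrt2$), and the four points of the other square. Those four come in two near ones at squared distance $d_1^2=h^2+1-\tfrac{\sqrt2}{2}$ and two far ones at squared distance $d_2^2=h^2+1+\tfrac{\sqrt2}{2}$, using $|p_0-q_j|^2=h^2+2R^2-2R^2\cos(\pi/4+j\pi/2)$. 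Each cosine at $p_0$ is $\ip{u}{v}/(|u||v|)$ with $u,v$ among these seven vectors. The inner products are affine in $h^2$: vectors within the same square have inner products independent of $h$, mixed pairs have inner product $\tfrac12+(\text{const})$, and pairs both in the other square have inner product $h^2+\text{const}$. Hence
\[
\F(h)=a_0+\frac{a_1}{d_1}+\frac{a_2}{d_2}+\frac{b_1h^2+c_1}{d_1^2}+\frac{b_2h^2+c_2}{d_2^2}+\frac{b_3h^2+c_3}{d_1d_2},
\]
with explicit constants in $\mathbb{Q}(\sqrt2)$, which I would tabulate orbit by orbit. Substituting $h=\hzero$, so $h^2=(1+2\sqrt2)/4$, should give $d_1^2=5/4$ and $d_2^2=(5+4\sqrt2)/4$. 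This matches the radicals $\sqrt5$ and $\sqrt{5+4\sqrt2}$ in \eqref{eq:conjectured-value}, and simplifying gives $\F(\hzero)=C_0$.

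For the sign of $\F'(\hzero)$, I will differentiate in the variable $t=h^2$. Each term is a rational function of $t$ times a half-integer power of $d_1^2$ or $d_2^2$. At $t_0=(1+2\sqrt2)/4$ the derivative is an element of $\mathbb{Q}(\sqrt2,\sqrt5,\sqrt{5+4\sqrt2})$. To show it is negative without floating point, I would group it as $\alpha+\beta\sqrt5+\gamma\sqrt{5+4\sqrt2}+\delta\sqrt5\sqrt{5+4\sqrt2}$ with $\alpha,\dots,\delta\in\mathbb{Q}(\sqrt2)$. Then I would bound each radical between rationals, for instance $\sqrt2\in(1.4142,1.4143)$ and similar two-sided bounds for $\sqrt5$ and $\sqrt{5+4\sqrt2}$, each certified by squaring. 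Since the sign of each coefficient is known, this yields a rational upper bound that is negative. Since $\hzero\approx0.9783>489/500$, negativity of the derivative alone does not give the comparison. I need strict decrease on the whole interval $[h_1,\hzero]$, or a direct comparison of values.

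I prefer the interval argument. Clearing the denominators of $\F'(h)$ as a function of $t$ gives $\F'$ in the form $P_0(t)+P_1(t)d_1+P_2(t)d_2+P_3(t)d_1d_2$ up to a positive factor, with polynomial $P_i$. On the tiny interval $t\in[h_1^2,\hzero^2]$, I would bound each of $d_1$, $d_2$ and $d_1d_2$ by rationals using their monotonicity in $t$. I would bound each $P_i$ using monotonicity or crude interval estimates for polynomials with rational-plus-$\sqrt2$ coefficients, again with $\sqrt2$ pinned between rationals. This would show $\F'<0$ throughout, hence $\F(h_1)>\F(\hzero)$. The alternative, if the interval is too wide for the crude bounds to close, is to subdivide it. Or I can evaluate $\F(h_1)$ exactly, where $d_1^2$ and $d_2^2$ lie in $\mathbb{Q}(\sqrt2)$, and compare it with $C_0$ via rational enclosures of all the radicals. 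The gap is tiny, far below three decimals, so the enclosures must be tight, perhaps to about $10^{-8}$.

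The main obstacle is this certification step. The improvement is minuscule, so the derivative near $\hzero$ is small and the interval bounds must be sharp. Factoring the derivative, as the abstract suggests, and isolating a single factor whose sign changes near the true maximizer would make the estimates robust. I would try first to rationalize $\F'$ into a polynomial condition in $t$ and $\sqrt2$ by successively squaring away $d_1$ and $d_2$, tracking the signs, and then locate its root by a Sturm-type or rational-bisection argument. Finally, non-local-maximality follows at once from $\F'(\hzero)<0$, and $\F(h_1)>C_0$ refutes Conjecture~\ref{conj:c8}.
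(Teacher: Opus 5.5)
Your proposal is correct in outline, but it reaches the decisive inequality by a different and more computational route than the paper.

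\textbf{Reduction.} You tabulate the $21$ cosines at $p_0$ orbit by orbit. The paper instead uses the identity $F_N=\tfrac12\sum_i\norm{v_i}^2-\tfrac{N(N-1)}{2}$, where $v_i$ is the sum of the unit vectors from $x_i$. Since $v_{p_0}=(-a(h),0,b(h))$, this gives $\F(h)=4\bigl(a(h)^2+b(h)^2-7\bigr)$, which expands to exactly your six-term form.

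\textbf{Derivative.} This is where the paper's form pays off. One has $a'=-\sqrt2\,hT$ and $b'=2T$ with $T=\alpha/d_-^3+\beta/d_+^3>0$. Hence $\F'(h)=8h(2+\sqrt2)\,T(h)\,K(h)$, where $K(h)=1/d_-(h)+(3-2\sqrt2)/d_+(h)-1$ is visibly strictly decreasing.

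\textbf{Certificate.} The whole certificate then reduces to the rational inequality $K(h_1)<0$, plus the separate exact check $h_1<\hzero$. The first follows from $\tfrac{7071}{5000}<\sqrt2<\tfrac{99}{70}$ and rational lower bounds on $d_\pm(h_1)$. Together these place $\hstar<h_1<\hzero$ and give strict decrease on $[h_1,\hzero]$. They also yield $\F'(\hzero)<0$ from $K(\hzero)<K(h_1)$, and the family optimum comes as a bonus. No squaring of radicals or Sturm-type isolation is needed.

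\textbf{Your interval plan.} Enclosing an unfactored $\F'$ on $[h_1^2,\hzero^2]$ with subdivision would also work. So would comparing $\F(h_1)$ with $C_0$ to precision well below the gap $\approx1.7\times10^{-6}$. The reason is that $\F'$ stays between roughly $-0.004$ and $-0.007$ on that interval and everything is Lipschitz, so finitely many subdivisions suffice.

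However, that decisive step is left unexecuted, and it is delicate. The sign-determining factor has size only about $2\times10^{-4}$ to $5\times10^{-4}$ on that interval. Naive enclosures of an expanded expression with many cancelling terms will therefore likely need several subdivisions and careful handling of the $\mathbb{Q}(\sqrt2)$ coefficients.

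Your route avoids having to find the factorization. The paper's route reduces the certificate to a few lines of exact arithmetic.
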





%% file: sections/main.tex
\section{Reducing the angular sum to a scalar function}
\label{sec:reduction}

\subsection{An identity for arbitrary point configurations}

\begin{lemma}[Vertex-sum identity]
\label{lem:vertex}
For $(x_1,\ldots,x_N)\in\Q_N$, define
\begin{equation}
 u_{ij}=\frac{x_j-x_i}{\norm{x_j-x_i}}
 \quad(j\ne i),\qquad
 v_i=\sum_{j\ne i}u_{ij}.
 \label{eq:unit-vectors}
\end{equation}
Then
\begin{equation}
 F_N=\frac12\sum_{i=1}^N\norm{v_i}^2-\frac{N(N-1)}2.
 \label{eq:vertex-sum}
\end{equation}
\end{lemma}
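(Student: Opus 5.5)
The plan is to expand $\norm{v_i}^2$ directly and match its off-diagonal part against the inner sum in \cref{eq:definition}. Fix a vertex $i$. Since each $u_{ij}$ is a unit vector,
\begin{equation*}
 \norm{v_i}^2=\sum_{j\ne i}\norm{u_{ij}}^2+\sum_{\substack{j\ne k\\ j,k\ne i}}\ip{u_{ij}}{u_{ik}}
 =(N-1)+2\sum_{\substack{j<k\\ j,k\ne i}}\ip{u_{ij}}{u_{ik}}.
\end{equation*}
The ordered double sum over $j\ne k$ is converted to twice the sum over $j<k$ using the symmetry of the inner product.

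The next step is to observe that $\ip{u_{ij}}{u_{ik}}$ is exactly the summand
$\ip{x_j-x_i}{x_k-x_i}/(\norm{x_j-x_i}\,\norm{x_k-x_i})$ of \cref{eq:definition}, which is the cosine of the angle at $x_i$. It is well defined because the points are pairwise distinct, so no denominator vanishes. Hence the inner sum in \cref{eq:definition} for vertex $i$ equals $\tfrac12\bigl(\norm{v_i}^2-(N-1)\bigr)$.

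Finally, we sum over $i=1,\ldots,N$. This gives $F_N=\tfrac12\sum_i\norm{v_i}^2-\tfrac12 N(N-1)$, which is \cref{eq:vertex-sum}. There is no real obstacle here. The only care needed is the bookkeeping factor of $2$ between ordered and unordered pairs $\{j,k\}$, and the remark that collinear triples cause no issue, since only nonvanishing vectors are normalized.
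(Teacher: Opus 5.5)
Your proposal is correct and is essentially the paper's own proof. Both expand $\norm{v_i}^2$, use the unit norms to get the constant $N-1$, identify the doubled off-diagonal sum with the vertex-$i$ cosine terms of the definition, and then sum over $i$.
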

\begin{proof}
Fix a vertex $i$. Since every $u_{ij}$ has norm one, expansion of the squared norm gives
\begin{align*}
 \norm{v_i}^2
 &=\ip{\sum_{j\ne i}u_{ij}}{\sum_{k\ne i}u_{ik}}\\
 &=\sum_{j\ne i}\norm{u_{ij}}^2
   +2\sum_{\substack{j<k\\j,k\ne i}}\ip{u_{ij}}{u_{ik}}\\
 &=N-1+2\sum_{\substack{j<k\\j,k\ne i}}
 \frac{\ip{x_j-x_i}{x_k-x_i}}
 {\norm{x_j-x_i}\,\norm{x_k-x_i}}.
\end{align*}
Subtract $N-1$, divide by two, and sum over $i=1,\ldots,N$. The double sum is exactly \cref{eq:definition}, and the constants sum to $N(N-1)/2$. This proves \cref{eq:vertex-sum}.
\end{proof}

\subsection{The seven directions from a vertex of the antiprism}

Set
\begin{equation}
 \alpha=1-\frac{s}{2},\qquad
 \beta=1+\frac{s}{2},\qquad
 d_-(h)=\sqrt{h^2+\alpha},\qquad
 d_+(h)=\sqrt{h^2+\beta}.
 \label{eq:distances}
\end{equation}
Because $1<s<2$, both $\alpha$ and $\beta$ are positive. Hence both distances are positive for every $h\geq0$.

\begin{lemma}[Exact one-variable expression]
\label{lem:scalar}
For $h\geq0$, put
\begin{align}
 a(h)&=1+s+\frac{s-1}{d_-(h)}+\frac{s+1}{d_+(h)},
 \label{eq:a}\\
 b(h)&=2h\left(\frac1{d_-(h)}+\frac1{d_+(h)}\right).
 \label{eq:b}
\end{align}
Then
\begin{equation}
 \F(h)=4\bigl(a(h)^2+b(h)^2-7\bigr).
 \label{eq:scalar}
\end{equation}
In particular, $\F$ is continuously differentiable on $[0,\infty)$ and smooth on $(0,\infty)$.
\end{lemma}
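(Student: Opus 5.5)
The plan is to combine \cref{lem:vertex} with the symmetry of $\A(h)$, which reduces the whole sum to the squared norm of a single vector $v$, and then to compute $v$ explicitly at one vertex.

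\textbf{Step 1: reduce to one vertex.} The configuration $\A(h)$ is invariant under a group of isometries acting transitively on its eight points. The rotation by $\pi/2$ about the $z$-axis permutes each square cyclically. The map $(x,y,z)\mapsto(x,-y,-z)$ composed with a suitable rotation by $\pi/4$ interchanges the two squares. For a transitive symmetry group, the vectors $v_i$ of \cref{lem:vertex} are images of one another under orthogonal maps, so all $\norm{v_i}$ coincide. Hence \cref{eq:vertex-sum} gives
\[
\F(h)=\tfrac12\cdot 8\norm{v}^2-28=4\bigl(\norm{v}^2-7\bigr),
\]
where $v$ is the vertex vector at $p_0(h)=(R,0,-h/2)$. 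It therefore suffices to show $\norm{v}^2=a(h)^2+b(h)^2$.

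\textbf{Step 2: list the seven directions from $p_0$.} I would sort the other seven points by their distance from $p_0$.
\begin{itemize}
\item The two neighbours in the lower square, $p_1$ and $p_3$, are at distance $1$. Their differences are $(-R,\pm R,0)$, with $x$-components summing to $-2R=-s$.
\item The opposite vertex $p_2$ is at distance $s$. Its unit direction is $(-1,0,0)$.
\item The upper points $q_0$ and $q_3$, at angles $\pm\pi/4$, have horizontal chord squared $2R^2(1-\cos\frac{\pi}4)=\alpha$. They are therefore at distance $d_-(h)$, with difference vectors $(\tfrac12-R,\pm\tfrac12 R\sqrt2,h)$.
\item The upper points $q_1$ and $q_2$, at angles $\pm3\pi/4$, have horizontal chord squared $\beta$. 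They are at distance $d_+(h)$, with difference vectors $(-\tfrac12-R,\pm\cdot,h)$.
\end{itemize}

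\textbf{Step 3: sum the unit vectors.} The $y$-components cancel in pairs by the reflection $y\mapsto -y$, which fixes $p_0$. For the $x$-components, note $2(\tfrac12-R)=1-s$ and $2(-\tfrac12-R)=-(1+s)$. The $x$-component of $v$ is therefore
\[
-s-1+\frac{1-s}{d_-}-\frac{1+s}{d_+}=-a(h).
\]
For the $z$-components, the four upper points contribute $2h/d_-+2h/d_+=b(h)$. Thus $v=(-a(h),0,b(h))$ and $\norm{v}^2=a^2+b^2$, which is \cref{eq:scalar}.

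\textbf{Step 4: regularity.} Since $\alpha,\beta>0$, the functions $d_\pm(h)=\sqrt{h^2+\alpha}$ and $d_\pm(h)=\sqrt{h^2+\beta}$ are smooth and positive on a neighbourhood of $[0,\infty)$. Hence $a$ and $b$, and so $\F$, are smooth there, which is more than the stated $C^1$ claim.

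\textbf{Main obstacle.} The step needing the most care is the transitivity claim in Step 1. I would verify directly that the explicit isometry maps $\A(h)$ onto itself and swaps the two squares. I would also check that $\norm{v_i}$ is invariant, which holds because an isometry $g$ sends $u_{ij}$ to $u_{g(i)g(j)}$ after applying its linear part. The remaining computations are routine trigonometry.
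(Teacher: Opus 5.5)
Your proposal is correct and matches the paper's proof essentially step for step: compute $v_{p_0}=(-a(h),0,b(h))$ from the three lower and four upper directions, use a transitive isometry group to equate all $\norm{v_i}$, and apply \cref{lem:vertex} with $N=8$. Your layer swap $(x,y,z)\mapsto(x,-y,-z)$ composed with a rotation by $\pi/4$ works just as well as the paper's horizontal reflection composed with that rotation, and the only slips are cosmetic: the second $d_\pm$ in Step 4 should be $d_+$, and the $y$-component $\tfrac12R\sqrt2$ is simply $\tfrac12$.
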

\begin{proof}
Take the lower vertex $p_0=(R,0,-h/2)$. The other three lower vertices have displacement vectors
\[
 (-R,R,0),\qquad (-2R,0,0),\qquad (-R,-R,0).
\]
Their lengths are $1,\sqrt2,1$, respectively. Their unit vectors therefore sum to
\[
 (-R,R,0)+(-1,0,0)+(-R,-R,0)=(-(1+s),0,0).
\]

The four upper vertices have horizontal coordinates
\[
 (1/2,1/2),\quad(-1/2,1/2),\quad
 (-1/2,-1/2),\quad(1/2,-1/2).
\]
For the two upper vertices with first coordinate $1/2$, the squared horizontal distance from $(R,0)$ is
\[
 (R-1/2)^2+(1/2)^2
 =R^2-R+1/2=1-\frac{s}{2}=\alpha.
\]
For the two with first coordinate $-1/2$, it is
\[
 (R+1/2)^2+(1/2)^2
 =R^2+R+1/2=1+\frac{s}{2}=\beta.
\]
Their full distances are consequently $d_-$ and $d_+$, respectively. The two vectors of length $d_-$ have cancelling second components; their summed unit-vector contribution is
\[
 \left(\frac{1-2R}{d_-},0,\frac{2h}{d_-}\right)
 =\left(-\frac{s-1}{d_-},0,\frac{2h}{d_-}\right).
\]
The contribution of the other pair is
\[
 \left(\frac{-1-2R}{d_+},0,\frac{2h}{d_+}\right)
 =\left(-\frac{s+1}{d_+},0,\frac{2h}{d_+}\right).
\]
Adding all seven directions gives $v_{p_0}=(-a(h),0,b(h))$.

Rotation by $\pi/2$ around the vertical axis permutes the vertices in each layer. Rotation by $\pi/4$ combined with reflection in the horizontal plane exchanges the layers and permutes all eight vertices. These isometries act transitively on the vertex set and preserve the lengths of the corresponding direction sums. Thus every $v_i$ has squared norm $a(h)^2+b(h)^2$. Applying \cref{lem:vertex} with $N=8$ yields
\[
 \F(h)=\frac12\,8\bigl(a(h)^2+b(h)^2\bigr)-\frac{8\cdot7}{2},
\]
which is \cref{eq:scalar}. The differentiability assertion follows because the radicands in \cref{eq:distances} are strictly positive.
\end{proof}

\subsection{Agreement with the printed radical value}

\begin{lemma}
\label{lem:printed-value}
The value in \cref{eq:conjectured-value} equals $\F(\hzero)$.
\end{lemma}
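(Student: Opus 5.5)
The plan is to evaluate the closed form of \cref{lem:scalar} at $h=\hzero$, using $\F(\hzero)=4\bigl(a(\hzero)^2+b(\hzero)^2-7\bigr)$, and then match the result term by term against the bracket in \cref{eq:conjectured-value}. The first step is to compute the two distances. Since $\hzero^2=(1+2s)/4$, we get
\[
 d_-(\hzero)^2=\frac{1+2s}{4}+\frac{4-2s}{4}=\frac54,\qquad
 d_+(\hzero)^2=\frac{1+2s}{4}+\frac{4+2s}{4}=\frac{5+4s}{4}.
\]
Hence $d_-(\hzero)=\sqrt5/2$ and $d_+(\hzero)=\sqrt{5+4s}/2$. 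Write $D=5+4s$. Then
\[
 a(\hzero)=(1+s)+\frac{2(s-1)}{\sqrt5}+\frac{2(s+1)}{\sqrt D},\qquad
 b(\hzero)=4\hzero\left(\frac1{\sqrt5}+\frac1{\sqrt D}\right),
\]
and $b(\hzero)^2=4(1+2s)\bigl(\tfrac15+\tfrac{2}{\sqrt5\sqrt D}+\tfrac1D\bigr)$, using $16\hzero^2=4(1+2s)$.

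Next, expand $a^2$ as $A^2+B^2+C^2+2AB+2AC+2BC$ with $A=1+s$, $B=2(s-1)/\sqrt5$ and $C=2(s+1)/\sqrt D$. Together with $b^2$, sort all terms into five classes: elements of $\mathbb{Q}(s)$, multiples of $1/\sqrt5$, multiples of $1/\sqrt D$, multiples of $1/(\sqrt5\sqrt D)$, and multiples of $1/D$. Multiplying by $4$ and subtracting $28$, compare each class with $16$ times the corresponding term of $C_0$. Two classes are immediate.
\begin{itemize}
\item The $1/\sqrt D$ class: $4\cdot 2AC=16(1+s)^2/\sqrt D=16(3+2s)/\sqrt D$, which is $16$ times the term $(3+2s)/\sqrt D$.
\item The $1/\sqrt5$ class: $4\cdot 2AB=16(1+s)(s-1)/\sqrt5=16/\sqrt5$.
\end{itemize}
The cross class collects $2BC=8(s^2-1)/(\sqrt5\sqrt D)=8/(\sqrt5\sqrt D)$ together with $8(1+2s)/(\sqrt5\sqrt D)$ from $b^2$. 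After multiplying by $4$, it should give $64(1+s)/(\sqrt5\sqrt D)$, which is $16\cdot 4(1+s)/(\sqrt5\sqrt D)$.

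The main obstacle is the bookkeeping for the $1/D$ class and the rational terms. These do not match individually, and one must use $D=5+4s$ to trade $1/D$ terms for rational ones. Concretely, $C^2$ and the $1/D$ part of $b^2$ combine to
\[
 \frac{4(3+2s)+4(1+2s)}{D}=\frac{16+16s}{D}.
\]
Since $16+16s=4D-4$, this equals $4-\frac{4}{D}$. After multiplying by $4$, this produces the term $-16/D$ and an additional rational $16$.

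It then remains to check the rational part. After multiplying by $4$, the rational contributions are
\[
 4\Bigl[A^2+B^2+\tfrac{4(1+2s)}5+4\Bigr]-28=4\Bigl[(3+2s)+\tfrac{4(3-2s)}5+\tfrac{4+8s}5+4\Bigr]-28.
\]
This should equal $16\bigl(\tfrac45+\tfrac1{\sqrt2}\bigr)=\tfrac{64}5+8s$. I expect this to work out, because the $s$-coefficient is $4(2-\tfrac85+\tfrac85)=8$ and the constant is $4(3+\tfrac{12}5+\tfrac45+4)-28=\tfrac{64}5$. If any class fails to match, I would recheck the sign conventions in $a$ and the factor $2h$ in $b$ from \cref{lem:scalar} before suspecting the printed formula.
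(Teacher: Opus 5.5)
Your proof is correct and takes essentially the paper's approach: you evaluate $4(a^2+b^2-7)$ from \cref{lem:scalar} at $\hzero$, expand, and use $s^2=2$ together with $16+16s=4(5+4s)-4$ to match the bracket term by term, and all five of your class computations are correct. The only difference is the order of steps. The paper first simplifies $a^2+b^2-7$ for general $h$ using $(s\mp1)^2+4h^2=4d_\mp^2-1$ and then substitutes $h=\hzero$, which handles automatically both your $1/D$-to-rational trade and the analogous combination of the $1/5$ terms.
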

\begin{proof}
For readability, write $d_-=d_-(h)$ and $d_+=d_+(h)$. Squaring \cref{eq:a,eq:b} and collecting terms gives
\begin{align}
 a^2+b^2-7
 ={}&(1+s)^2-7
 +\frac{2(1+s)(s-1)}{d_-}
 +\frac{2(1+s)(s+1)}{d_+}\notag\\
 &+\frac{(s-1)^2+4h^2}{d_-^2}
 +\frac{(s+1)^2+4h^2}{d_+^2}
 +\frac{2(s-1)(s+1)+8h^2}{d_-d_+}.
 \label{eq:expansion}
\end{align}
Use $s^2=2$ to obtain
\begin{gather*}
 (1+s)^2=3+2s,\qquad (1+s)(s-1)=1,\qquad
 (1+s)(s+1)=3+2s,\\
 (s-1)^2+4h^2=4d_-^2-1,\qquad
 (s+1)^2+4h^2=4d_+^2-1,\qquad
 (s-1)(s+1)=1.
\end{gather*}
Substitution in \cref{eq:expansion} therefore yields
\begin{equation}
\begin{split}
 a^2+b^2-7={}&4+2s+\frac2{d_-}+\frac{2(3+2s)}{d_+}
 -\frac1{d_-^2}-\frac1{d_+^2}
 +\frac{2+8h^2}{d_-d_+}.
\end{split}
\label{eq:expanded-simplified}
\end{equation}
At $h=\hzero$, we have $h^2=(1+2s)/4$, so
\[
 d_-^2=\frac54,\qquad d_+^2=\frac{5+4s}{4},\qquad
 2+8h^2=4(1+s).
\]
Let $u=\sqrt5$ and $t=\sqrt{5+4s}$. Then $d_-=u/2$, $d_+=t/2$. Dividing \cref{eq:expanded-simplified} by four and using \cref{eq:scalar}, we obtain
\begin{align*}
 \frac{\F(\hzero)}{16}
 &=1+\frac{s}{2}+\frac1u+\frac{3+2s}{t}
   -\frac15-\frac1{t^2}+\frac{4(1+s)}{ut}\\
 &=\frac45+\frac1{\sqrt2}+\frac1{\sqrt5}
 +\frac{4(1+\sqrt2)}{\sqrt5\sqrt{5+4\sqrt2}}
 +\frac{3+2\sqrt2}{\sqrt{5+4\sqrt2}}
 -\frac1{5+4\sqrt2}.
\end{align*}
Here $s/2=1/\sqrt2$ because $s^2=2$. The last expression is $C_0/16$.
\end{proof}

\section{Derivative factorization and the unique best height}
\label{sec:derivative}

Define
\begin{equation}
 c=3-2s=(s-1)^2>0,\qquad
 K(h)=\frac1{d_-(h)}+\frac{c}{d_+(h)}-1,
 \label{eq:K}
\end{equation}
and
\begin{equation}
 T(h)=\frac{\alpha}{d_-(h)^3}+\frac{\beta}{d_+(h)^3}>0.
 \label{eq:T}
\end{equation}

\begin{lemma}[Sign of the derivative]
\label{lem:derivative}
For every $h>0$,
\begin{equation}
 \F'(h)=8h(2+s)T(h)K(h).
 \label{eq:factor}
\end{equation}
Thus $\F'(h)$ has the same sign as $K(h)$.
\end{lemma}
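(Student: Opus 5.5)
The plan is a direct differentiation of \cref{eq:scalar} from \cref{lem:scalar}, followed by simplification using $s^2=2$. For $h>0$ we have $\F'(h)=8\bigl(a(h)a'(h)+b(h)b'(h)\bigr)$, so the work is to compute $a'$ and $b'$ in a form where the common factor $h\,T(h)$ is visible.

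\emph{Computing $b'$.} Since $d_\pm'=h/d_\pm$, we get $\frac{d}{dh}\frac1{d_\pm}=-\frac{h}{d_\pm^3}$. For each of the two distances,
\[
\frac{d}{dh}\Bigl(\frac{2h}{d_\pm}\Bigr)=2\Bigl(\frac1{d_\pm}-\frac{h^2}{d_\pm^3}\Bigr)=\frac{2(d_\pm^2-h^2)}{d_\pm^3}.
\]
Here $d_-^2-h^2=\alpha$ and $d_+^2-h^2=\beta$, which gives $b'(h)=2T(h)$ exactly.

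\emph{Computing $a'$.} Differentiating \cref{eq:a} gives
\[
a'(h)=-h\Bigl(\frac{s-1}{d_-^3}+\frac{s+1}{d_+^3}\Bigr).
\]
The key observation is that $s\alpha=s-s^2/2=s-1$ and $s\beta=s+1$. Hence $a'(h)=-h\,s\,T(h)$. This is the step where one must spot the right identity; once it is in hand the rest is bookkeeping.

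\emph{Combining the two derivatives.} Substituting, and writing $b=2h(1/d_-+1/d_+)$,
\[
\F'(h)=8hT(h)\Bigl(-s\,a(h)+\frac4{d_-}+\frac4{d_+}\Bigr).
\]
Expanding $-s\,a$ with $s^2=2$ gives $-(2+s)-\frac{2-s}{d_-}-\frac{2+s}{d_+}$. Adding $4/d_-+4/d_+$, the bracket becomes
\[
-(2+s)+\frac{2+s}{d_-}+\frac{2-s}{d_+}.
\]
Factoring out $2+s$ leaves the coefficient $(2-s)/(2+s)$ on $1/d_+$, and
\[
\frac{2-s}{2+s}=\frac{(2-s)^2}{4-2}=3-2s=c.
\]
The bracket is therefore $(2+s)K(h)$, which proves \cref{eq:factor}.

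The sign statement follows because $h>0$, $2+s>0$, and $T(h)>0$ as a sum of positive terms; so $\F'(h)$ has the sign of $K(h)$. I expect no real obstacle. The only delicate point is recognizing the identities $s\alpha=s-1$, $s\beta=s+1$ and $(2-s)/(2+s)=c$, which make the factorization close up.
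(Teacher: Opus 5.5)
Your proposal is correct and matches the paper's proof essentially step for step. Like the paper, you differentiate $\F=4(a^2+b^2-7)$, get $b'=2T$ and $a'=-shT$ via $s\alpha=s-1$ and $s\beta=s+1$, and reduce the bracket to $4/d_-+4/d_+-s\,a$. You then factor out $2+s$ using $(2-s)/(2+s)=3-2s$, and read off the sign from the positivity of $h$, $2+s$ and $T(h)$.
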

\begin{proof}
Differentiating $d_\pm(h)^2=h^2+\alpha$ or $h^2+\beta$, respectively, gives
\[
 d_\pm'(h)=\frac{h}{d_\pm(h)},\qquad
 \left(\frac1{d_\pm(h)}\right)'=-\frac{h}{d_\pm(h)^3}.
\]
The relations $s\alpha=s-1$ and $s\beta=s+1$ imply
\begin{equation}
 a'(h)=-h\left(\frac{s-1}{d_-^3}+\frac{s+1}{d_+^3}\right)
 =-shT(h).
 \label{eq:a-prime}
\end{equation}
If $S(h)=1/d_-+1/d_+$, differentiation of $b(h)=2hS(h)$ gives
\begin{align}
 b'(h)
 &=2\left(\frac1{d_-}+\frac1{d_+}\right)
   -2h^2\left(\frac1{d_-^3}+\frac1{d_+^3}\right)\notag\\
 &=2\left(\frac{d_-^2-h^2}{d_-^3}
          +\frac{d_+^2-h^2}{d_+^3}\right)
 =2T(h).
 \label{eq:b-prime}
\end{align}
By \cref{eq:scalar,eq:a-prime,eq:b-prime},
\begin{align*}
 \F'(h)&=8\bigl(a(h)a'(h)+b(h)b'(h)\bigr)\\
 &=8T(h)\bigl(-sh\,a(h)+2b(h)\bigr)\\
 &=8hT(h)\bigl(4S(h)-s\,a(h)\bigr).
\end{align*}
Now expand the final bracket:
\begin{align*}
 4S-sa
 &=\frac{4-s(s-1)}{d_-}+\frac{4-s(s+1)}{d_+}-s(1+s)\\
 &=\frac{2+s}{d_-}+\frac{2-s}{d_+}-(2+s)\\
 &=(2+s)\left(\frac1{d_-}+\frac{3-2s}{d_+}-1\right).
\end{align*}
The last equality uses
\[
 \frac{2-s}{2+s}=\frac{(2-s)^2}{4-s^2}
 =\frac{6-4s}{2}=3-2s.
\]
This proves \cref{eq:factor}. All factors preceding $K(h)$ are strictly positive for $h>0$.
\end{proof}

\begin{theorem}[Complete optimization within the family]
\label{thm:family}
There exists exactly one $\hstar>0$ satisfying
\begin{equation}
 \frac1{\sqrt{\hstar^2+1-1/\sqrt2}}
 +\frac{3-2\sqrt2}{\sqrt{\hstar^2+1+1/\sqrt2}}=1.
 \label{eq:root}
\end{equation}
The function $\F$ is strictly increasing on $[0,\hstar]$ and strictly decreasing on $[\hstar,\infty)$. Consequently $\A(\hstar)$ is the unique maximizer in the family when parametrized by $h\geq0$.
\end{theorem}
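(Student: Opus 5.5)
By \cref{lem:derivative}, for $h>0$ the sign of $\F'(h)$ is the sign of $K(h)$. So the plan is to study $K$ alone. First I will check that \cref{eq:root} is exactly the equation $K(\hstar)=0$. Since $\alpha=1-s/2=1-1/\sqrt2$ and $\beta=1+1/\sqrt2$, we have $d_-(h)=\sqrt{h^2+1-1/\sqrt2}$ and $d_+(h)=\sqrt{h^2+1+1/\sqrt2}$. Also $c=3-2\sqrt2$. Hence \cref{eq:root} reads $1/d_-+c/d_+=1$, which is $K(\hstar)=0$.

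Next comes monotonicity. Both $d_-$ and $d_+$ are strictly increasing in $h$ on $[0,\infty)$, and $c>0$. Therefore $K(h)=1/d_-(h)+c/d_+(h)-1$ is strictly decreasing and continuous on $[0,\infty)$. As $h\to\infty$ we have $K(h)\to -1<0$. At $h=0$,
\[
 K(0)=\alpha^{-1/2}+c\,\beta^{-1/2}-1>\alpha^{-1/2}-1>0,
\]
because $0<\alpha<1$. By the intermediate value theorem and strict monotonicity, $K$ has exactly one zero $\hstar\in(0,\infty)$. Moreover $K>0$ on $[0,\hstar)$ and $K<0$ on $(\hstar,\infty)$. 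Any positive solution of \cref{eq:root} is a zero of $K$, so this also proves uniqueness of $\hstar$ in the theorem.

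Finally, I transfer the sign information to $\F$. By \cref{lem:derivative}, $\F'>0$ on $(0,\hstar)$ and $\F'<0$ on $(\hstar,\infty)$. By \cref{lem:scalar}, $\F$ is continuous on $[0,\infty)$. The mean value theorem then gives that $\F$ is strictly increasing on $[0,\hstar]$ and strictly decreasing on $[\hstar,\infty)$. The endpoint $0$ is covered because $\F'>0$ on the open interval and $\F$ is continuous at $0$. It follows that $\F(h)<\F(\hstar)$ for every $h\ne\hstar$ with $h\ge0$, so $\A(\hstar)$ is the unique maximizer in the family.

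I expect no serious obstacle. The only step that needs care is checking that $K(0)>0$, which follows from $\alpha<1$, together with the limit of $K$ at infinity. The rest is a direct consequence of the factorization in \cref{lem:derivative}.
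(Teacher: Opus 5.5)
Your proposal is correct and follows essentially the same route as the paper: identify \cref{eq:root} with $K(\hstar)=0$, show $K$ is strictly decreasing with $K(0)>0$ and $K\to-1$, then transfer the sign of $K$ to $\F'$ via \cref{lem:derivative} and apply the mean value theorem. The only cosmetic difference is that you get strict decrease of $K$ from the monotonicity of $d_\pm$ rather than by computing $K'(h)$ explicitly.
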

\begin{proof}
The function $K$ is continuous on $[0,\infty)$. For $h>0$ its derivative is
\[
 K'(h)=-\frac{h}{d_-(h)^3}-\frac{ch}{d_+(h)^3}<0,
\]
because $c>0$ and $d_\pm>0$. Hence $K$ is strictly decreasing. Also $0<\alpha<1$, so
\[
 K(0)=\frac1{\sqrt\alpha}+\frac{c}{\sqrt\beta}-1>0.
\]
As $h\to\infty$, both reciprocals in \cref{eq:K} tend to zero; therefore $K(h)\to-1$. The intermediate value theorem supplies a positive zero $\hstar$, and strict decrease makes it unique. Equation~\eqref{eq:root} is precisely $K(\hstar)=0$, since $s/2=1/\sqrt2$.

For $0<h<\hstar$ we have $K(h)>0$, and for $h>\hstar$ we have $K(h)<0$. By \cref{lem:derivative}, the same signs hold for $\F'(h)$. To make strict monotonicity explicit, take $0\leq u<v\leq\hstar$. The mean value theorem gives
\[
 \F(v)-\F(u)=\F'(\xi)(v-u)
\]
for some $\xi\in(u,v)\subset(0,\hstar)$. The right-hand side is positive. If $\hstar\leq u<v$, the same argument has $\xi>\hstar$ and makes the difference negative. Thus every height different from $\hstar$ has a strictly smaller value.
\end{proof}

\section{An exact rational certificate for the counterexample}
\label{sec:certificate}

It remains to locate $h_1=489/500$ on the decreasing side of $\F$, and to compare it with $\hzero$. The following certificate uses only rational comparisons and positivity of square roots.

\begin{lemma}[Rational bounds at the explicit height]
\label{lem:rational}
The heights satisfy
\begin{equation}
 0<\hstar<h_1<\hzero.
 \label{eq:height-order}
\end{equation}
More specifically,
\begin{equation}
 K(h_1)<-\frac{2409}{15200720}<0.
 \label{eq:K-negative}
\end{equation}
\end{lemma}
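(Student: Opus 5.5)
We prove $h_1<\hzero$ first, since it is purely rational. Both sides are positive, so $h_1<\hzero$ is equivalent to $4h_1^2<1+2s$, that is, $4h_1^2-1<2s$. Here $4h_1^2=4\cdot 239121/250000=956484/250000$, so $4h_1^2-1=706484/250000>0$. Since both sides of $4h_1^2-1<2s$ are positive, we may square: it becomes $(706484)^2<8\cdot 250000^2=5\cdot 10^{11}$. Now $706484^2\approx 4.991\times 10^{11}$, so the inequality holds; it is a finite integer check. Positivity $\hstar>0$ is already part of \cref{thm:family}.

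Next we bound $K(h_1)$ from above using rational bounds for $s$ and the two square roots. Write $K(h_1)=1/d_-+c/d_+-1$ with $d_-^2=h_1^2+1-s/2$, $d_+^2=h_1^2+1+s/2$, and $c=3-2s$.

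1. Choose rationals $s_-<s<s_+$, for example $s_-=14142135/10^7$ and $s_+=14142136/10^7$. Each is certified by squaring, e.g.\ $s_-^2<2<s_+^2$.
2. From these, $d_-^2>h_1^2+1-s_+/2=:m_-$ and $d_+^2>h_1^2+1+s_-/2=:m_+$, and also $c<3-2s_-$.
3. Pick rationals $r_-,r_+$ with $r_\pm^2\le m_\pm$, certified by squaring. Then $d_\pm>r_\pm$.
4. Hence
\[
K(h_1)<\frac1{r_-}+\frac{3-2s_-}{r_+}-1,
\]
and the right-hand side is an explicit rational, which should be $\le -2409/15200720$.

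The particular constant $-2409/15200720$ presumably comes from the author's specific choice of $r_\pm$ with small denominators. I would try to pick $r_\pm$, possibly with denominators of a few thousand, so that the final rational is at most this value. Numerically $K(h_1)\approx-1.6\times10^{-4}$, and the gap is comfortable: $2409/15200720\approx1.58\times10^{-4}$. If that constant is not reproduced exactly, any certified negative bound still gives the qualitative conclusion.

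Finally, $\hstar<h_1$ follows from \cref{thm:family}'s proof: $K$ is strictly decreasing with $K(\hstar)=0$, so $K(h_1)<0$ forces $h_1>\hstar$. The main obstacle is step 4: the margin of about $1.6\times10^{-4}$ means the lower bounds on $d_\pm$ must be accurate to roughly $10^{-5}$ relative. Choosing rational square-root under-approximations precise enough, while keeping the integer verifications manageable, is the only delicate point; everything else is routine rational arithmetic.
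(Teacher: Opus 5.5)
Your strategy is the paper's: rational bounds on $s$, lower bounds on $d_\pm(h_1)$ certified by squaring, an upper bound on $c=3-2s$, and then $\hstar<h_1$ from the strict decrease of $K$. Your double-squaring check $706484^2<5\cdot 10^{11}$ for $h_1<\hzero$ is correct. It is a harmless variant of the paper's estimate $\hzero^2-h_1^2>(1+2\cdot\tfrac{7071}{5000})/4-\tfrac{239121}{250000}=\tfrac{77}{125000}$.

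The gap is that you never carry out step~4. You leave $r_\pm$ unspecified, and the final rational only ``should be'' at most $-2409/15200720$. Your fallback, ``any certified negative bound,'' gives $K(h_1)<0$. That is enough for $\hstar<h_1$ and for the counterexample, but it does not prove \eqref{eq:K-negative}, which is part of the statement. For a quantitative claim like this, the certificate is the proof.

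The missing computation is short. With $7071/5000<s<99/70$ (checked by squaring), one has $\tfrac{239121}{250000}+1-\tfrac{99}{140}-\bigl(\tfrac{11177}{10000}\bigr)^2=\tfrac{61497}{700000000}>0$. One also has $\tfrac{239121}{250000}+1+\tfrac{7071}{10000}-\bigl(\tfrac{204}{125}\bigr)^2=\tfrac{1}{6250}>0$. Hence $d_-(h_1)>11177/10000$ and $d_+(h_1)>204/125$, while $c<429/2500$. Therefore
\[
 K(h_1)+1<\frac{10000}{11177}+\frac{429/2500}{204/125}
 =\frac{10000}{11177}+\frac{143}{1360}
 =\frac{15198311}{15200720}=1-\frac{2409}{15200720},
\]
which is exactly where the constant comes from.

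Your numerical estimate is also off. In fact $K(h_1)\approx-2.23\times10^{-4}$; the figure $-1.58\times10^{-4}$ is the certified bound, not the true value. If $K(h_1)$ really were about $-1.6\times10^{-4}$, the slack against the stated constant would be of order $10^{-6}$, hardly ``comfortable.'' The actual slack of about $6.4\times10^{-5}$ is why four-digit bounds such as $11177/10000$ and $204/125$ suffice.
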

\begin{proof}
We first establish the elementary bounds
\begin{equation}
 \frac{7071}{5000}<s<\frac{99}{70}.
 \label{eq:sqrt-bounds}
\end{equation}
All three quantities are positive. Squaring the proposed lower bound gives
\[
 \left(\frac{7071}{5000}\right)^2
 =\frac{49999041}{25000000}
 <\frac{50000000}{25000000}=2,
\]
whereas squaring the upper bound gives
\[
 \left(\frac{99}{70}\right)^2
 =\frac{9801}{4900}>\frac{9800}{4900}=2.
\]
Since the square function is strictly increasing on nonnegative numbers, \cref{eq:sqrt-bounds} follows.

We have $h_1^2=239121/250000$. Using the upper bound on $s$ in the expression for $d_-^2$ gives
\[
 d_-(h_1)^2>\frac{239121}{250000}+1-\frac{99}{140}.
\]
The rational difference
\begin{equation}
 \frac{239121}{250000}+1-\frac{99}{140}
 -\left(\frac{11177}{10000}\right)^2
 =\frac{61497}{700000000}>0
 \label{eq:dminus-certificate}
\end{equation}
therefore implies
\begin{equation}
 d_-(h_1)>\frac{11177}{10000}.
 \label{eq:dminus-bound}
\end{equation}
For the other distance, use the lower bound on $s$:
\[
 d_+(h_1)^2>\frac{239121}{250000}+1+\frac{7071}{10000}.
\]
Here
\begin{equation}
 \frac{239121}{250000}+1+\frac{7071}{10000}
 -\left(\frac{204}{125}\right)^2
 =\frac1{6250}>0,
 \label{eq:dplus-certificate}
\end{equation}
so
\begin{equation}
 d_+(h_1)>\frac{204}{125}.
 \label{eq:dplus-bound}
\end{equation}
Again the passage from squared inequalities to these inequalities is valid because every quantity is positive.

The coefficient $c=(s-1)^2$ is positive, and the lower bound on $s$ gives
\begin{equation}
 0<c=3-2s<3-2\frac{7071}{5000}=\frac{429}{2500}.
 \label{eq:c-bound}
\end{equation}
Inverting \cref{eq:dminus-bound,eq:dplus-bound}, then using \cref{eq:c-bound}, yields
\begin{align*}
 K(h_1)+1
 &=\frac1{d_-(h_1)}+\frac{c}{d_+(h_1)}\\
 &<\frac{10000}{11177}+\frac{429/2500}{204/125}\\
 &=\frac{10000}{11177}+\frac{143}{1360}\\
 &=\frac{15198311}{15200720}
 =1-\frac{2409}{15200720}<1.
\end{align*}
This proves \cref{eq:K-negative}. Since $K$ is strictly decreasing and $K(\hstar)=0$, it follows that $\hstar<h_1$.

Finally, \cref{eq:sqrt-bounds} also gives
\begin{align*}
 \hzero^2-h_1^2
 &=\frac{1+2s}{4}-\frac{239121}{250000}\\
 &>\frac{1+2(7071/5000)}4-\frac{239121}{250000}\\
 &=\frac{77}{125000}>0.
\end{align*}
Both heights are positive, so $h_1<\hzero$. Combining the inequalities proves \cref{eq:height-order}.
\end{proof}

\begin{proof}[Proof of \cref{thm:counterexample}]
By \cref{lem:rational}, the interval $[h_1,\hzero]$ lies strictly to the right of $\hstar$. By \cref{thm:family}, $\F$ is strictly decreasing there. Consequently
\[
 \F(h_1)>\F(\hzero).
\]
Lemma~\ref{lem:printed-value} identifies the latter value with $C_0$, proving \cref{eq:counterexample}. Moreover $K(\hzero)<K(h_1)<0$, and \cref{lem:derivative} implies $\F'(\hzero)<0$.

For every sufficiently small $\varepsilon>0$ with $\hzero-\varepsilon>h_1$, strict decrease gives
\[
 \F(\hzero-\varepsilon)>\F(\hzero).
\]
The coordinates of $\A(\hzero-\varepsilon)$ tend to those of $\A(\hzero)$ as $\varepsilon\to0$. Hence every neighborhood of the conjectured configuration contains a feasible configuration with larger objective. It is not a local maximum in $\Q_8$, and in particular cannot attain the asserted global maximum. Since $\A(h_1)\in\Q_8$, we also have $\sup_{\Q_8}F_8\geq\F(h_1)>C_0$.
\end{proof}

\section{Reproducibility and numerical size}
\label{sec:reproducibility}

The proof above is independent of numerical optimization, symbolic-computation software, or unproved geometric classifications. The accompanying file \texttt{verify\_certificate.py} checks the rational arithmetic using Python's standard-library \texttt{Fraction} type and supplies additional rational interval enclosures using integer square roots. It also evaluates the original $168$ angle terms directly with high-precision decimal arithmetic and compares them with \cref{eq:scalar}. These direct evaluations are diagnostic checks; the strict counterexample follows from \cref{sec:certificate}.

The values in \cref{tab:values} illustrate why rounding obscures the improvement. The displayed decimals are numerical approximations; their signs are not used as premises in the proof.
\begin{table}[htbp]
 \centering
 \caption{Numerical values within the square-antiprism family.}
 \label{tab:values}
 \begin{tabular}{lll}
 \toprule
 Configuration & Height & Angular sum $\F(h)$\\
 \midrule
 Conjectured configuration & $0.978318343479$ & $79.501077891096$\\
 Explicit counterexample & $0.978000000000$ & $79.501079634630$\\
 Unique family maximizer & $0.977698583533$ & $79.501080174966$\\
 \bottomrule
 \end{tabular}
\end{table}

The exact scalar equation \cref{eq:root}, together with the monotonicity of $K$, is also a constructive specification of $\hstar$: bisection with rationally certified signs can enclose it to any requested accuracy. An algebraic characterization is provided in \cref{app:algebraic}; no choice among extraneous squared roots is needed to define the correct height.

%% file: sections/conclusion.tex
\section{Conclusion}

The explicit configuration $\A(489/500)$ has angular sum strictly greater than the radical value in Mouchet's exact eight-point conjecture. The conjectured configuration is not stationary with respect to its plane separation. Both assertions follow from a derivative factorization and short exact rational inequalities.

The square-antiprism family itself admits a unique optimal separation, characterized by \cref{eq:root}. Determining whether this corrected family optimum is globally optimal over all eight-point configurations remains outside the result proved here. The five-point, six-point, and asymptotic questions are also unaffected. A broader claim would require a global upper bound for arbitrary configurations, which this paper does not supply.

%% file: sections/disclosure.tex
\section{Disclosure}
\label{sec:disclosure}
The proof strategy and counterexample were produced by OpenAI’s GPT-5.6 Sol Ultra through Codex in
response to prompts from the author. Codex was also used to revise
the exposition and prepare the LaTeX manuscript. The author selected the
problem, directed the interactions and revisions, and is the sole named author. The AI system is acknowledged as a reasoning and writing tool, not
as an author. This disclosure is not a substitute for independent expert
mathematical review.

%% file: sections/appendix.tex
\section{The counterexample in explicit coordinates}
\label{app:coordinates}

The configuration used in \cref{thm:counterexample} consists of the following points. This table removes any dependence on evaluating trigonometric functions to construct the example.
\begin{center}
\begin{tabular}{cccc}
\toprule
Point & First coordinate & Second coordinate & Third coordinate\\
\midrule
$p_0$ & $1/\sqrt2$ & $0$ & $-489/1000$\\
$p_1$ & $0$ & $1/\sqrt2$ & $-489/1000$\\
$p_2$ & $-1/\sqrt2$ & $0$ & $-489/1000$\\
$p_3$ & $0$ & $-1/\sqrt2$ & $-489/1000$\\
$q_0$ & $1/2$ & $1/2$ & $489/1000$\\
$q_1$ & $-1/2$ & $1/2$ & $489/1000$\\
$q_2$ & $-1/2$ & $-1/2$ & $489/1000$\\
$q_3$ & $1/2$ & $-1/2$ & $489/1000$\\
\bottomrule
\end{tabular}
\end{center}
Within either layer the four points are distinct, and points from different layers have different third coordinates. Thus the configuration belongs to $\Q_8$. The horizontal side lengths are one, and the difference between the two third coordinates is $978/1000=489/500$, as required.

\section{An algebraic characterization of the family optimum}
\label{app:algebraic}

\begin{proposition}
\label{prop:quartic}
Let $s=\sqrt2$. The polynomial
\begin{equation}
 P(x)=x^4-2x^3+(13s-16)x^2-2sx+s
 \label{eq:quartic}
\end{equation}
has exactly one real zero $x>1$. If that zero is denoted by $x_*$, the unique optimal height of \cref{thm:family} is
\begin{equation}
 \hstar=\sqrt{x_*^2-1+s/2}.
 \label{eq:algebraic-height}
\end{equation}
\end{proposition}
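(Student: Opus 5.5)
The plan is to change variables from the height to the shorter distance $x=d_-(h)$ and to show that the root equation \eqref{eq:root} becomes, for $x>1$, equivalent to $P(x)=0$. By \cref{eq:distances}, $h^2=x^2-\alpha=x^2-1+s/2$. Since $\beta-\alpha=s$, we also get $d_+(h)^2=x^2+s$. For $h\ge 0$ the map $h\mapsto x$ is a strictly increasing bijection onto $[\sqrt\alpha,\infty)$. Its inverse is $h=\sqrt{x^2-1+s/2}$, which is exactly \eqref{eq:algebraic-height}.

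\emph{Step 1: the family optimum gives $x_*>1$ and a root of $P$.} Put $x_*=d_-(\hstar)$. The condition $K(\hstar)=0$ from \cref{thm:family} reads
\[
 \frac{c}{\sqrt{x_*^2+s}}=1-\frac1{x_*}.
\]
The left side is positive because $c>0$, so $1/x_*<1$ and hence $x_*>1$. Multiplying through by $x_*\sqrt{x_*^2+s}$ gives
\[
 c\,x_*=(x_*-1)\sqrt{x_*^2+s}.
\]
Squaring and using $c^2=(3-2s)^2=17-12s$ gives
\[
 (17-12s)x_*^2=(x_*^2-2x_*+1)(x_*^2+s)=x_*^4-2x_*^3+(1+s)x_*^2-2sx_*+s.
\]
Rearranging, the coefficient of $x_*^2$ becomes $1+s-17+12s=13s-16$, so $P(x_*)=0$.

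\emph{Step 2: $P$ has no other zero in $(1,\infty)$.} This is the only delicate point, since squaring can introduce extraneous roots. For $x>1$ both $cx$ and $(x-1)\sqrt{x^2+s}$ are positive. Squaring is therefore reversible on this range, and the zeros of $P$ in $(1,\infty)$ are exactly the solutions there of $g(x)=c$, where
\[
 g(x)=\Bigl(1-\frac1x\Bigr)\sqrt{x^2+s}.
\]
On $(1,\infty)$, $g$ is a product of two positive, strictly increasing functions, so it is strictly increasing. Moreover $g(1)=0<c$ and $g(x)\to\infty$ as $x\to\infty$. Hence $g(x)=c$ has exactly one solution in $(1,\infty)$, and $P$ has exactly one real zero there. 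By Step 1 that zero is $x_*=d_-(\hstar)$.

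\emph{Step 3: conclusion.} Since $x_*=d_-(\hstar)$ and $\hstar>0$, inverting the change of variables gives $\hstar=\sqrt{x_*^2-1+s/2}$, which is \eqref{eq:algebraic-height}. As a consistency check in the other direction: for the unique zero $x>1$, the number $h=\sqrt{x^2-\alpha}$ is positive because $x>1>\sqrt\alpha$. Reversing the algebra of Step 1 shows $K(h)=0$, and the uniqueness in \cref{thm:family} then forces $h=\hstar$ independently.
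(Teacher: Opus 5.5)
Your proof is correct and follows the paper's argument: the substitution $x=d_-(\hstar)$, the deduction $x>1$ from positivity of $c/\sqrt{x^2+s}$, and squaring, which is reversible on $(1,\infty)$ because both sides are positive. The only difference is in how you show that $P$ has exactly one zero in $(1,\infty)$. You prove this directly from the strict monotonicity of $g(x)=(1-1/x)\sqrt{x^2+s}$. The paper instead reverses the squaring and invokes the uniqueness of $\hstar$ from \cref{thm:family}, which is exactly your Step~3 consistency check. Your version is therefore slightly more self-contained but otherwise equivalent.
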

\begin{proof}
Set $x=d_-(\hstar)$. Then $d_+(\hstar)^2=x^2+s$, and \cref{eq:root} becomes
\begin{equation}
 \frac1x+\frac{c}{\sqrt{x^2+s}}=1,
 \qquad c=3-2s>0.
 \label{eq:x-root}
\end{equation}
The second term is positive, so $1/x<1$ and $x>1$. Rearrangement of \cref{eq:x-root} gives
\[
 \sqrt{x^2+s}=\frac{cx}{x-1}.
\]
Both sides are positive. Squaring and multiplying by $(x-1)^2$ is therefore reversible for $x>1$ and gives
\begin{align*}
 0&=(x^2+s)(x-1)^2-c^2x^2\\
  &=x^4-2x^3+(1+s-c^2)x^2-2sx+s.
\end{align*}
Since $c^2=(3-2s)^2=17-12s$, we have $1+s-c^2=13s-16$. Thus $P(x)=0$.

Conversely, suppose $x>1$ and $P(x)=0$. The expansion just given shows
\[
 (x^2+s)(x-1)^2=c^2x^2.
\]
All factors whose square roots are to be taken are positive, so
\[
 \sqrt{x^2+s}\,(x-1)=cx.
\]
Dividing by the positive quantity $x\sqrt{x^2+s}$ gives \cref{eq:x-root}. Now define $h=\sqrt{x^2-1+s/2}$, which is real and positive because $x>1$. Then $d_-(h)=x$ and $d_+(h)=\sqrt{x^2+s}$, so \cref{eq:x-root} says $K(h)=0$. The uniqueness of this positive height was proved in \cref{thm:family}. It follows that $h=\hstar$, and then $x=d_-(\hstar)$ is unique as well. This proves all assertions.
\end{proof}

\section{Exact interval checks and their interpretation}
\label{app:intervals}

The supplied verifier uses only the Python standard library. Its basic certificate is the rational calculation in \cref{sec:certificate}. Its additional interval calculations yield the following enclosures, where every terminating decimal is interpreted as an exact rational number:
\begin{align*}
 0.97769858&<\hstar<0.97769859,\\
 79.501077891095953&\leq\F(\hzero)\leq79.501077891095954,\\
 79.501079634630014&\leq\F(h_1)\leq79.501079634630015,\\
 79.501080174965714&\leq\F(\hstar)\leq79.501080174965715,\\
 0.000001743534060&\leq\F(h_1)-\F(\hzero)\leq0.000001743534061.
\end{align*}
These supplementary finite computations are not used in the analytic proof. To specify their verification completely, we describe the arithmetic and the root-enclosure procedure.

Represent a real number by a closed rational interval containing it. Addition and subtraction use endpoint addition and subtraction; multiplication takes the minimum and maximum of the four endpoint products. For an interval $[a,b]$ not containing zero, its reciprocal is $[1/b,1/a]$. These rules follow from monotonicity of addition and reciprocal, and from the fact that a bilinear function on a rectangle attains its extrema at its corners. Rational arithmetic is exact throughout.

For a nonnegative rational $r=p/q$ with $p\geq0$ and $q>0$, and an integer scale $M=10^{50}$, let
\[
 n=\left\lfloor\sqrt{\left\lfloor pM^2/q\right\rfloor}\right\rfloor.
\]
The integer square-root routine computes this $n$ exactly. By definition,
\[
 n^2\leq\left\lfloor\frac{pM^2}{q}\right\rfloor<(n+1)^2.
\]
Because the middle quantity is an integer, it is at most $(n+1)^2-1$. Hence
\[
 n^2\leq\frac{pM^2}{q}<(n+1)^2,
\]
which proves
\[
 \frac{n}{M}\leq\sqrt{r}<\frac{n+1}{M}.
\]
For an interval $[a,b]\subset[0,\infty)$, apply this lower bound to $a$ and this upper bound to $b$. Monotonicity of square root proves containment of the entire image. In particular, repeated appearances of $\sqrt2$ need not be treated as independent exact numbers: the interval operations remain valid even though they may enlarge the enclosure.

Starting with $[97769858/10^8,97769859/10^8]$, the verifier establishes $K$ positive at the left endpoint and negative at the right endpoint by these interval rules. Continuity and strict decrease then enclose the unique root. Sixty bisection steps refine this bracket, with each retained half determined by a rigorously signed interval for $K$ at the midpoint. If an interval were to include zero, the verifier would stop instead of inferring a sign. Substitution of the resulting height interval into \cref{eq:a,eq:b,eq:scalar} encloses $\F(\hstar)$. The same formulas evaluated at $h_1$ and an interval for $\hzero$ give the other value enclosures. Subtracting the latter intervals gives the improvement enclosure above.

The independent decimal calculation serves a different purpose: it builds the eight points and directly enumerates the three vertex cosines of each of the $56$ unordered triples. It checks the geometric reduction and transcription of the source constant. Agreement of decimal calculations alone would not prove a strict inequality; that role is fulfilled by the rational certificate.





%% file: references.bib
@article{Mouchet2006JMP,
  author = {Mouchet, Amaury},
  title = {Upper and lower bounds for an eigenvalue associated with a positive eigenvector},
  journal = {Journal of Mathematical Physics},
  volume = {47},
  number = {2},
  pages = {022109},
  year = {2006},
  doi = {10.1063/1.2168124},
  url = {https://arxiv.org/abs/math/0505541},
  note = {Preprint first submitted May 25, 2005; Conjecture 3 ($C_8$), preprint p. 11, journal p. 022109-8}
}

@article{Mouchet2006NPA,
  author = {Mouchet, Amaury},
  title = {Bounding the ground-state energy of a many-body system with the differential method},
  journal = {Nuclear Physics A},
  volume = {765},
  pages = {319--341},
  year = {2006},
  doi = {10.1016/j.nuclphysa.2005.10.015},
  url = {https://arxiv.org/abs/nucl-th/0506069}
}

@article{HiriartUrruty2009,
  author = {Hiriart-Urruty, Jean-Baptiste},
  title = {A new series of conjectures and open questions in optimization and matrix analysis},
  journal = {ESAIM: Control, Optimisation and Calculus of Variations},
  volume = {15},
  number = {2},
  pages = {454--470},
  year = {2009},
  doi = {10.1051/cocv:2008040},
  url = {https://www.numdam.org/articles/10.1051/cocv:2008040/},
  note = {Problem 4, pp. 458--459; published online June 24, 2008}
}
